\def\dim{{\rm dim\ }}
\def\A{{\mathcal A}}
\def\supp{\mathrm{supp}}
\def\h{\mathfrak{h}}
\def\NN{{\mathbb N}}
\def\RR{{\mathbb R}}
\def\conv{\mathrm{conv}}
\def\diam{\mathrm{diam}}
\def\XX#1{\mathcal{X}#1}
\def\g{\mathfrak{g}}
\def\ad{\mathrm{ad}}
\def\Exp{\mathrm{Exp}}
\def\P{\mathcal{P}}
\def\diam{\mathrm{diam\,}}
\def\K{\widehat{K}}
\def\GE#1{G_e\left(#1\right)}
\theoremstyle{plain}
\newtheorem{thm}{Theorem}[section]
\newtheorem{prop}[thm]{Proposition}
\newtheorem{dfn}[thm]{Definition}
\theoremstyle{definition}
\newtheorem*{rem}{Remark}
\begin{document}

\title{A support theorem for the X-ray transform on manifolds with plane covers}

\author{Norbert Peyerimhoff\footnote{Research supported by the University of Cyprus.} \and Evangelia Samiou$^*$}

%\acknowledgement{Research supported by the University of Cyprus.}

%\volume{121}
%\pubyear{1997}
%\setcounter{page}{147}
%\receivedline{Received \textup{2} June \textup{1994;}
%              revised \textup{3} April \textup{1995}}

\maketitle

\begin{abstract}
  This article is concerned with support theorems of the X-ray
  transform for manifolds with conjugate points. In
  particular, we prove a support theorem for $2$-step nilpotent Lie
  groups. An important ingredient of the proof is the concept of plane
  covers. We also provide examples of non-homogeneous $3$-dimensional
  simply connected manifolds with conjugate points which have support
  theorems.
\end{abstract}

\section{Introduction}\label{sec:1}

The classical X-ray transform associates to a continuous, sufficiently rapidly decreasing function on the Euclidean plane the values of its integrals along straight lines. J. Radon \cite{Ra1917} proved in 1917 an inversion formula for the X-ray transform and mentioned that, with a small modification, his inversion formula also holds for the hyperbolic plane. The X-ray transform and its variants are fundamental to computed tomography (see, e.g., \cite{Epstein2008, Feeman2015, Kuchment2014}).

The X-ray transform has also been studied in other geometries like Riemannian Symmetric and Damek-Ricci Spaces (see, e.g., \cite{Hel1965,Hel1981,Hel2011,Rou2006,Rou2010}) or so-called \emph{simple manifolds}, i.e., bounded manifolds with strictly convex boundary and without conjugate points (see, e.g., \cite{PSh1988,Shara1994,Kr2009,PSU2013}). The methods introduced for simple manifolds played also an important role in the boundary rigidity problem, i.e., the question whether the distance function between boundary points of a Riemannian manifold determines the Riemannian metric in the interior. See, e.g., \cite{PSU2014}, for a survey with a list of open problems.

In this article, we are particularly interested in \emph{support theorems} for the X-ray transform on spaces \emph{with conjugate points}. The classical support theorem in the Euclidean plane states the following for sufficiently decaying continuous functions $f \in C(\RR^2)$: if the integrals of $f$ over all geodesics avoiding a given closed ball $B$ vanish then we have $\supp f \subset B$ (see \cite[Theorem I.2.6]{Hel2011}). Moreover, this fact can be generalized to arbitrary compact sets $K \subset \RR^2$ (instead of balls). In this case the conclusion is $\supp f \subset \conv(K)$, where $\conv(K)$ is the convex hull of $K$ (see \cite[Cor. I.2.8]{Hel2011}). In particular the support theorem implies injectivity of the X-ray transform. It is natural to ask for analogous results in more general geometries. A support theorem for simple manifolds was proved in \cite{Kr2009}. We will use this for the Heisenberg group, even though the Heisenberg group has conjugate points.

All Riemannian manifolds $(M,g)$ in this paper are assumed to be simply connected, complete and non-compact. Moreover, we only consider functions with compact support to guarantee that the integrals over all \emph{escaping} geodesics are finite. Let us recall the definition of escaping geodesics (see, e.g., \cite{Woj1982}):

\begin{dfn} A unit speed geodesic $\gamma:\RR\to M$ is called \emph{escaping} if $\gamma$ is a proper map, i.e. for every compact set $K \subset M$ there exists $t_0 > 0$ such that $\gamma(t) \not\in K$ whenever $|t| \ge t_0$. We denote the set of all escaping geodesics of $M$ by $\GE{M}$. For any subset $M' \subset M$, let $\GE{M'}$ denote the set of all escaping geodesics of $M$ contained in $M'$. 
\end{dfn}

In plain words, an escaping geodesic $\gamma$ eventually leaves every given compact set. Let $C_c(M)$ be the space of all continuous functions with compact support. For every function $f\in C_c(M)$ the integral of $f$ over an escaping geodesic exists. In simply connected manifolds without conjugate points all geodesics are escaping. By the Cartan-Hadamard Theorem Riemannian manifolds of non-positive curvature do not have conjugate points. On the other hand, the Cartesian product $M = \RR \times S^2$ of the real line and the unit sphere has conjugate points and a geodesic $\gamma$ is escaping if and only if $\gamma'$ has a non-zero component in the direction of $\RR$.

\begin{dfn} The X-ray transform $\XX{f}$ of $f\in C_c(M)$ is the function on $\GE{M}$ given by
$$\XX{f}(\gamma):=\int_{-\infty}^\infty f(\gamma(t))dt\ .$$
\end{dfn}

Note that in the simply connected example $M = \RR \times S^2$ the
X-ray transform is not injective: Let $f \in C_c(M)$ be the constant
extension of a non-zero odd function $f_0 \in C_c(\RR)$. Then we have
$\XX{f}(\gamma) = 0$ for all escaping geodesics $\gamma$, but $f$ is
not zero.

\begin{dfn}\label{def:supthm}
We say that $(M,g)$ has a \emph{support theorem} if for every compact set $K \subset M$ there exists a compact set $\K \subset M$ with the following properties:
\begin{itemize} 
\item[(a)] For all functions $f \in C_c(M)$, $\XX{f}\vert_{\GE{M\backslash K}} = 0$
implies that $\supp f \subset \K$.
\item[(b)] If $K_n \subset M$ is a sequence of compact subsets of $M$ with $K_{n+1}\subset K_n$ and $\diam(K_n) \to 0$, then also \hbox{$\diam(\widehat{K_n}) \to 0$}.
\end{itemize}
\end{dfn}

We will be particularly interested in the concrete construction of a set $\K \subset M$ from a given compact set $K \subset M$ satisfying properties (a) and (b) in Definition \ref{def:supthm}. While in the Euclidean case we can choose $\K=\conv(K)$ to be the convex hull of $K$, it is not obvious how to choose $\K$ in more general geometries, in particular if there are conjugate points.

Let us discuss the properties (a) and (b) in more detail.

If $K$ is the support of a non-negative function $f\in C_c(M)$, then $\XX{f}(\gamma)>0$ for every geodesic $\gamma$ through the interior of $K$. It follows that $\K$ must contain the interior of $K$ for all compact sets $K$.

Property (b) guarantees that if the set $K$ becomes small then so does $\K$. In particular, if $K=\emptyset$, then $\diam\K=0$, hence $\K$ is empty or a singleton and any function $f$ with $\supp f\subset\K$ is zero. Thus (b) implies that the X-ray transform is injective.

\medskip

We aim to extend the strong support theorem in the Euclidean and hyperbollic plane to manifolds exhausted by such planes. 

\begin{dfn} A $2$-dimensional \emph{totally geodesic closed} submanifold $\Sigma$ of $(M,g)$ is called a \emph{plane in $M$} if $\Sigma$ is diffeomorphic to $\RR^2$ and if the induced metric on $\Sigma$ has constant non-positive curvature. A \emph{plane cover} of $M$ is a collection $\P$ of planes in $M$ such that
$$ M = \bigcup_{\Sigma \in \mathcal{P}} \Sigma. $$
\end{dfn}

Since a plane is closed all geodesics of $M$ contained in a plane are escaping. Hence a manifold with a plane cover has escaping geodesics through every point. 

Important classes of Riemannian manifolds $(M,g)$ have plane covers, most prominently symmetric spaces of noncompact type and higher rank. In Damek-Ricci spaces, and therefore in particular in non compact rank-$1$-symmetric spaces, every geodesic is contained in a hyperbolic plane, see \cite{Rou2010}. Hence these also have a plane cover. More generally homogeneous spaces containing a flat or a hyperbolic plane have a plane cover. In Proposition \ref{prop:homspace} we prove that such homogeneous spaces also have a support theorem. 

Proposition \ref{lm:nilp} implies that all $2$-step nilpotent Lie groups with left invariant metric of dimension $\ge 4$ have Euclidean plane covers. Nilpotent Lie groups of lower dimensions are abelian or, up to rescaling, isometrically isomorphic to the Heisenberg group $H^3$ (which coincides with the filiform group $L_3$, see \cite{KP2010}). By different techniques we show that $H^3$ also has a support theorem.

\begin{thm}
All simply connected $2$-step nilpotent Lie groups with left invariant metrics have a support theorem. In particular, the X-ray transform is injective.
\end{thm}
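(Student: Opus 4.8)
The plan is to prove the theorem by a case distinction according to the dimension $n=\dim G$ of the group $G$, assembling the statement from the structural results already established. The groups split into two regimes: those admitting a Euclidean plane cover — namely the abelian groups of any dimension together with all $2$-step nilpotent groups of dimension $n\ge 4$ — and the single exceptional non-abelian group in dimension $3$, which up to rescaling is the Heisenberg group $H^3$. Since a $2$-step nilpotent \emph{non}-abelian Lie algebra has dimension at least $3$, and the only $3$-dimensional such group is $H^3$, this dichotomy is exhaustive.

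I would dispose of the first regime uniformly via plane covers and homogeneity. If $G$ is abelian it is isometric to Euclidean $\RR^n$ and (for $n\ge 2$) carries the obvious cover by affine flat planes; if $n\ge 4$ then Proposition \ref{lm:nilp} furnishes a Euclidean plane cover $\P$. In either case $G$ is a Riemannian homogeneous space, since left translations act transitively by isometries, so it is a homogeneous space containing a flat plane and Proposition \ref{prop:homspace} applies directly, yielding a support theorem. For transparency I would recall the mechanism: given a compact $K\subset G$ one sets
\[
  \K \;=\; \bigcup_{\Sigma\in\P}\conv_\Sigma\bigl(K\cap\Sigma\bigr),
\]
where $\conv_\Sigma$ is the geodesic convex hull inside the plane $\Sigma$. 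Because each $\Sigma$ is closed and totally geodesic, a geodesic lying in $\Sigma$ is escaping and meets $K$ exactly when it meets $K\cap\Sigma$; hence $\XX{f}\vert_{\GE{G\backslash K}}=0$ forces the planar X-ray transform of $f\vert_\Sigma$ to vanish on every line of $\Sigma$ avoiding $K\cap\Sigma$. The classical Euclidean support theorem \cite[Cor. I.2.8]{Hel2011}, applied on each $\Sigma$, gives $\supp(f\vert_\Sigma)\subset\conv_\Sigma(K\cap\Sigma)$, and taking the union over $\P$ yields $\supp f\subset\K$, which is property (a). Property (b) follows from flatness: the convex hull of a set in a flat plane has the same diameter as the set, so any $p\in\conv_\Sigma(K\cap\Sigma)$ satisfies $d(p,K)\le\diam(K)$; thus $\K$ lies in the closed $\diam(K)$-neighbourhood of $K$ (compact by completeness) and one checks $\diam(\K)\le 3\,\diam(K)$, which tends to $0$ with $\diam(K)$.

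It remains to treat $H^3$, and here the plane-cover strategy genuinely fails. A direct computation of the left-invariant Levi-Civita connection shows that the natural abelian $2$-planes spanned by a horizontal direction and the central direction are \emph{not} totally geodesic (for instance $\nabla_X Z=-\tfrac12 Y$), so $H^3$ admits no Euclidean plane cover and moreover has conjugate points; neither the reduction to the planar transform nor integration over arbitrary geodesics is available. Instead I would exploit the rotational symmetry of $H^3$ about its centre to organise the escaping geodesics avoiding $K$ into invariant families supported on truncatable subdomains whose interiors are free of conjugate points and whose boundaries are strictly convex, i.e.\ \emph{simple} in the sense of \cite{Kr2009}. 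Applying the simple-manifold support theorem of \cite{Kr2009} on these pieces produces local support constraints, which one then reassembles and passes to the limit as the subdomains exhaust $H^3$, extracting the compact set $\K$ and verifying (a) and (b).

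The main obstacle is precisely this last case. For the abelian groups and for $n\ge 4$ the theorem is essentially a packaging of Propositions \ref{lm:nilp} and \ref{prop:homspace} with the classical planar result, and the verification of (a) and (b) is routine once a plane cover is in hand. The real difficulty is $H^3$: because it has conjugate points and no plane cover, one must carve out conjugate-point-free simple regions, control the boundary so that \cite{Kr2009} is applicable, and then glue the family of local supports into a single compact $\K$ satisfying the shrinking property (b). Ensuring that the exhausting regions remain simple and that the limiting procedure respects (b) is where the substantive work lies.
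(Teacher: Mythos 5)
Your reduction of the problem---abelian groups and dimension $\ge 4$ handled via Propositions \ref{lm:nilp} and \ref{prop:homspace}, with the Heisenberg group $H^3$ as the single remaining case---is exactly the paper's architecture, and in the first regime citing those propositions does suffice. The genuine gap is in your treatment of $H^3$, which is where the real content of the theorem lies. You propose to apply Krishnan's theorem on simple subdomains and then ``pass to the limit as the subdomains exhaust $H^3$''. This cannot work as stated, for two reasons. First, $H^3$ has conjugate points, so no increasing family of simple subdomains can exhaust it: a compact geodesic segment joining a conjugate pair is eventually contained in any exhausting domain, and simplicity fails from then on. Second, and more fundamentally, applying \cite{Kr2009} on a subdomain $D$ requires the integrals of $f$ over geodesic \emph{segments} of $D$, whereas your hypothesis only controls integrals over \emph{complete} geodesics of $H^3$ avoiding $K$; a complete geodesic crossing $D$ may meet $\supp f$ again outside $D$, so the segment integrals are simply not part of the X-ray data. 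Overcoming exactly this is the content of the paper's Section 3: one encloses everything in paraboloids $P$ adapted to the horizontal geodesics $\gamma_{x,\alpha}$ (each touches $\partial P$ in exactly one point), normalizes so that $\supp f\subset P$ touches $\partial P$ at a point $q$ while the obstacle $L$ lies strictly inside a shifted paraboloid $P^h$, and then uses uniform escape of geodesics in $2$-step nilpotent groups (Theorem 3.1 of \cite{PS2016}) plus compactness to produce a family $\A$ of complete geodesics that avoid $L$ (so their total integrals vanish) and meet $\supp f$ only inside a small simple ball $B_\delta(q)$. Only then do total integrals coincide with segment integrals, and \cite{Kr2009}, applied to this family (which deforms to a point), forces $f$ to vanish near $q$, contradicting $q\in\supp f$. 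None of this mechanism---tangent paraboloids, normalization of $\supp f$, uniform escape---appears in your sketch, and you yourself flag this step as the ``substantive work'' without supplying it.

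A smaller but real flaw occurs in the plane-cover regime: you verify property (b) of Definition \ref{def:supthm} by claiming that any $p\in\conv_\Sigma(K\cap\Sigma)$ satisfies $d(p,K)\le\diam(K)$ ``by flatness''. This conflates the \emph{intrinsic} diameter of $K\cap\Sigma$ inside the plane $\Sigma$ with the \emph{ambient} diameter of $K$: a totally geodesic flat plane in a nilpotent group need not be metrically convex. Concretely, in $G=H^3\times\RR$ the central plane $\Sigma=\Exp(\mathfrak{z})$ is one of the planes produced by Proposition \ref{lm:nilp}, yet vertical geodesics of $H^3$ have conjugate points (at distance $2\pi$) and stop minimizing there, so for $K=\{e,\Exp(TZ)\}$ with $T$ large the intrinsic diameter $T$ of $K\cap\Sigma$ strictly exceeds $\diam K$, and your chain of inequalities breaks. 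This is precisely why the paper's Proposition \ref{prop:homspace} proves (b) by a compactness-and-continuity argument (finiteness of $D_K$ and the limit $D_{K_n}\to 0$) rather than by any inequality of the form $\diam\K\le C\,\diam K$. Since you also invoke Proposition \ref{prop:homspace} directly, this does not sink the first regime, but your ``transparent mechanism'' is not a correct substitute for its proof.
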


Products of Riemannian manifolds with only escaping geodesics have plane covers. However plane covers yielding support theorems do not always arise from homogeneity or a product structure. In section \ref{example3dim} we provide examples of a warped product metric on $\RR^3$ which is non-homogeneous and has conjugate points. Nonetheless, the metric has a plane cover yielding a support theorem. 
\bigskip

A more quantitative viewpoint on support theorems has been taken in \cite{PS2016}. The results there depend on the more stringent condition of {\em uniformly} esacaping geodesics, i.e. {\em every} geodesic $\gamma$ must have left the ball of radius $r$ around $\gamma(0)$ after a finite time $P(r)$ independent of the geodesic. If a manifold has a support theorem in the sense of Definition \ref{def:supthm} and $\sigma\colon\RR^+_0\to\RR^+_0$ is a function so that for all $p\in M$ and all $r\in\RR^+_0$ we have $\widehat{B_{\sigma(r)}(p)}\subset B_r(p)$, then the $\sigma$-support theorem as defined in \cite{PS2016} holds in the manifold.

\section{Plane covers}
Planes have a simple support theorem. By \cite{Hel1965}, for instance, the support of a compactly supported function on a plane lies in the convex hull of a set if the X-ray transform of the function vanishes on all geodesics avoiding the set.  
\begin{prop} \label{prop:planecover}
	Let $\P$ be a plane cover of $M$ and $K\subset M$ (not necessarily compact). Let $f \in C_c(M)$. If \hbox{$\XX{f}|_{\GE{M\setminus K} }=0$}, then 
	\begin{eqnarray}\label{defKhat} 
	\supp f \subset \K &:=& \{ x \in M \mid \forall \Sigma \in \P, x \in \Sigma\colon x \in \conv_\Sigma(K)\} \\
&=& \bigcap_{\Sigma\in\P}\left((M\setminus\Sigma)\cup\conv_\Sigma(K)\right), \nonumber
	\end{eqnarray}
	where for $A\subset M$, $\conv_{\Sigma}(A)$ denotes the convex hull of $A\cap\Sigma$ in $\Sigma$. In particular, $\XX{}$ is injective.
\end{prop}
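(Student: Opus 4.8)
The plan is to reduce the statement to the classical support theorem on a single plane, applied one plane at a time, and then to intersect the resulting information across the whole cover. Fix $f\in C_c(M)$ with $\XX{f}|_{\GE{M\setminus K}}=0$, and fix a plane $\Sigma\in\P$. Since $\Sigma$ is a closed submanifold of $M$, the restriction $f|_\Sigma$ lies in $C_c(\Sigma)$; since $\Sigma$ is totally geodesic, the geodesics of $\Sigma$ are precisely the geodesics of $M$ that remain in $\Sigma$, and each of them is escaping in $M$, as recorded before the proposition. Consequently the planar X-ray transform of $f|_\Sigma$ along a geodesic of $\Sigma$ agrees with $\XX{f}(\gamma)$. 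Being complete (a closed totally geodesic submanifold of a complete manifold), simply connected, and of constant non-positive curvature, $\Sigma$ is isometric to the Euclidean or the hyperbolic plane.

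The first key step is matching the two families of geodesics. A geodesic $\gamma$ contained in $\Sigma$ satisfies $\gamma\cap K=\gamma\cap(K\cap\Sigma)$, so $\gamma$ avoids $K$ if and only if it avoids $K\cap\Sigma$ within $\Sigma$. Hence every geodesic of $\Sigma$ missing $K\cap\Sigma$ belongs to $\GE{M\setminus K}$, and the hypothesis forces the planar X-ray transform of $f|_\Sigma$ to vanish on all geodesics of $\Sigma$ avoiding $K\cap\Sigma$. The support theorem quoted from \cite{Hel1965} then applies verbatim on $\Sigma$ and yields $\supp(f|_\Sigma)\subset\conv_\Sigma(K)$.

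Next I would assemble these planar conclusions. If $f(x)\neq0$, then for every plane $\Sigma\in\P$ with $x\in\Sigma$ we have $x\in\{f|_\Sigma\neq0\}\subset\supp(f|_\Sigma)\subset\conv_\Sigma(K)$; by the defining formula \eqref{defKhat} this says exactly $x\in\K$. Thus $f$ vanishes on $M\setminus\K$, i.e.\ $\{x:f(x)\neq0\}\subset\K$. Injectivity is the special case $K=\emptyset$: here $\conv_\Sigma(\emptyset)=\emptyset$ for every $\Sigma$, so $\K=\bigcap_{\Sigma\in\P}(M\setminus\Sigma)=M\setminus\bigcup_{\Sigma\in\P}\Sigma=\emptyset$ because $\P$ covers $M$, which forces $f\equiv0$.

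The step I expect to carry the real weight is the passage from pointwise vanishing to the closed support. The argument above cleanly produces $\{f\neq0\}\subset\K$, and hence $\supp f\subset\overline{\K}$; the containment $\supp f\subset\K$ as written is the genuinely delicate point, since $\K$ need not be closed for an arbitrary cover (the complement $\bigcup_{\Sigma\in\P}(\Sigma\setminus\conv_\Sigma(K))$ is a union of pieces of planes and need not be open). One therefore has to argue that the local vanishing of $f$ along the planes through a point $x\notin\K$ actually places $x$ outside $\supp f$ — for instance by exploiting closedness of each $\supp(f|_\Sigma)$ and of $\conv_\Sigma(K)$ together with sufficient richness of the cover near $x$. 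Everything else is formal once the geodesic matching of the first step is set up correctly, so this compatibility between the planar supports and the ambient support is where I would concentrate the care.
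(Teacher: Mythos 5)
Correct, and this is essentially the paper's own proof: for $x\notin\K$ pick $\Sigma\in\P$ with $x\in\Sigma\setminus\conv_\Sigma(K)$, note that geodesics of $\Sigma$ avoiding $K\cap\Sigma$ are geodesics of $M$, escaping and avoiding $K$, apply the planar support theorem of \cite{Hel1965} to $f|_\Sigma$, and conclude $f(x)=0$; injectivity is the special case $K=\emptyset$, exactly as you argue. As for the point you single out as genuinely delicate: the paper does not address it either --- its proof establishes precisely the same pointwise statement $f|_{M\setminus\K}=0$, i.e.\ $\{f\neq0\}\subset\K$ and hence only $\supp f\subset\overline{\K}$, and then writes ``$\supp f\subset\K$'' without comment; so you have not missed an idea present in the paper, and the discrepancy is harmless for everything the proposition is used for later (the diameter estimates in Proposition \ref{prop:homspace} and injectivity are unaffected by replacing $\K$ with its closure).
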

\begin{proof}
	Let $f \in C_c(M)$ and $K \subset M$ be compact such that  $\XX{f}|_{\GE{M\setminus K}}=0$. Let $x \in M \backslash \K$. By the definition of $\K$ there is $\Sigma\in\P$ with $x\in\Sigma\setminus\conv_\Sigma(K\cap\Sigma)$. Let $f_0 = f\Big\vert_{\Sigma}$ and $K_0 = \Sigma \cap K$. Then $f_0 \in C_c(\Sigma)$ and $K_0 \subset \Sigma$ is compact. Moreover, $x \in \Sigma$ and $x \not\in {\conv}_\Sigma(K_0)$. After rescaling the metric by a constant factor $\Sigma$ is isometric to the Euclidean or hyperbolic plane. All geodesics $\gamma$ in $\Sigma$ are escaping and also geodesics in $M$. Therefore
$$\XX_\Sigma(f_0)|_{\Sigma\setminus K_0}=0\ .$$
Now we employ the Support Theorem for planes and conclude that
$$ f_0 \Big\vert_{\Sigma \backslash {\rm conv}_{\Sigma}(K_0)} \equiv 0, $$
and, in particular, $f(x) = f_0(x) = 0$.

By \eqref{defKhat}, if $K=\emptyset$ then $\K=\emptyset$, hence $\XX{}$ is injective.
\end{proof}

This does not automatically yield a support theorem because the set $\K$ defined by \eqref{defKhat} need not be precompact (i.e. bounded). In homogeneous spaces however, existence of a plane implies a support theorem.

\begin{prop}\label{prop:homspace} Let $M$ be a homogeneous space, $G$ 	a closed transitive subgroup of the isometry group of $M$, and $\P$ be a plane cover of $M$ containing all translates of at least one plane $\Sigma$. Then $\K$, defined in \eqref{defKhat}, is bounded for every compact subset $K \subset M$ and $M$ has a support theorem. 
\par
It follows that a homogeneous space which contains a plane has a support theorem.
\end{prop}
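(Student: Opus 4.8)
The plan is to show that the set $\K$ from \eqref{defKhat} has diameter controlled by $\diam K$; the support theorem then follows from Proposition \ref{prop:planecover}. First I would record the elementary metric fact underlying everything. A plane $\Sigma'\in\P$ is, after rescaling, a Euclidean or hyperbolic plane, hence a CAT(0) space in which metric balls are convex; consequently the intrinsic convex hull $\conv_{\Sigma'}(K\cap\Sigma')$ is contained in the intrinsic ball of radius $\diam_{\Sigma'}(K\cap\Sigma')$ about any point of $K\cap\Sigma'$. Moreover, since $\Sigma'$ is totally geodesic, every path in $\Sigma'$ is a path in $M$, so $d_M\le d_{\Sigma'}$ on $\Sigma'$. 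Hence, for $x\in\K$, choosing any $\Sigma'\in\P$ through $x$ (one exists since $\P$ covers $M$) and any $y\in K\cap\Sigma'$ (nonempty because $x\in\conv_{\Sigma'}(K\cap\Sigma')$), we obtain
$$ d_M(x,y)\ \le\ d_{\Sigma'}(x,y)\ \le\ \diam_{\Sigma'}(K\cap\Sigma'). $$
Thus everything reduces to bounding the intrinsic diameters $\diam_{\Sigma'}(K\cap\Sigma')$ uniformly over all planes $\Sigma'\in\P$.

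To that end I would introduce, for $D\ge0$ and a fixed base point $o$,
$$ \rho(D):=\sup\bigl\{\,d_{\Sigma'}(o,q)\ :\ \Sigma'\in\P,\ o\in\Sigma',\ q\in\Sigma',\ d_M(o,q)\le D\,\bigr\}. $$
Here the hypotheses enter decisively: because $G$ is transitive and $\P$ consists of all $G$-translates of $\Sigma$, the family $\P$ is $G$-invariant, so $\rho(D)$ does not depend on $o$, and a translation argument (move one point of $K\cap\Sigma'$ to $o$ by an element of $G$) shows $\diam_{\Sigma'}(K\cap\Sigma')\le\rho(\diam K)$ for every plane $\Sigma'$. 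Combined with the previous display this gives $d_M(x,K)\le\rho(\diam K)$ for all $x\in\K$, whence $\diam\K\le\diam K+2\rho(\diam K)$.

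It therefore suffices to prove that $\rho(D)<\infty$ for every $D$ and that $\rho(D)\to0$ as $D\to0^+$; this is the main obstacle. The point is that although the set of planes through $o$ is not compact as a family of subsets of $M$, the corresponding tangent planes lie in the compact Grassmannian $\mathrm{Gr}_2(T_oM)$. I would argue that, since $G$ is a closed subgroup of the isometry group it acts properly, so the $G$-orbit of $\Sigma$ inside the Grassmann bundle $\mathrm{Gr}_2(TM)$ is closed; intersecting this closed orbit with the compact fiber over $o$ shows that the tangent planes at $o$ of the planes in $\P$ through $o$ form a compact set. Continuous dependence of a totally geodesic plane, and of its intrinsic distance function, on its tangent plane at $o$ then makes $(\Sigma',q)\mapsto d_{\Sigma'}(o,q)$ continuous on a set that becomes compact once $q$ is restricted to the compact ball $\overline{B_D(o)}$; finiteness of $\rho(D)$ follows. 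A standard contradiction argument—extract planes $\Sigma'_n$ and points $q_n\to o$ with $d_{\Sigma'_n}(o,q_n)$ bounded below, pass to a convergent tangent plane, and invoke the same continuity—gives $\rho(D)\to0$ as $D\to0$.

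Finally I would assemble the conclusions. Proposition \ref{prop:planecover} yields $\supp f\subset\K$; the diameter bound shows $\K$ is bounded, so its closure is compact and serves as the set $\K$ required in (a), while $\rho(D)\to0$ turns $\diam\K\le\diam K+2\rho(\diam K)$ into property (b). For the last assertion, if the homogeneous space $M$ merely contains one plane $\Sigma$, then all of its $G$-translates are planes and cover $M$ by transitivity, so they constitute a plane cover containing all translates of $\Sigma$, and the first part applies.
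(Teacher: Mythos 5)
Your reduction is attractive and its first half is correct: the CAT(0)/convexity observation, the inequality $d_M\le d_{\Sigma'}$, and the translation argument giving $\diam\widehat{K}\le\diam K+2\rho(\diam K)$ all work (modulo the harmless replacement of $\P$ by the $G$-invariant subfamily of translates of $\Sigma$, which only enlarges $\widehat{K}$). But the step you yourself flag as ``the main obstacle''---finiteness of $\rho(D)$ and its decay at $0$---is not actually proved, for two reasons. First, the compactness claim is unjustified: the set of tangent planes at $o$ of translates of $\Sigma$ through $o$ is $\{dg(T_s\Sigma)\mid g\in G,\ s\in\Sigma,\ gs=o\}$, i.e.\ the intersection with the fiber $\mathrm{Gr}_2(T_oM)$ of the $G$-saturation of the whole (closed but non-compact) Gauss image $\{T_s\Sigma\mid s\in\Sigma\}$, not of a single $G$-orbit. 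Proper actions have closed point-orbits, but the saturation of a closed non-compact set under a proper action need not be closed (e.g.\ $\RR$ acting on $\RR^2$ by horizontal translations: the saturation of the closed graph of $x\mapsto e^{-x}$ is the open upper half-plane). Reducing to the honest orbit of the single point $T_p\Sigma$ would require every translate through $o$ to have a representative $g$ with $gp=o$, i.e.\ that $\mathrm{Stab}_G(\Sigma)$ act transitively (or at least cocompactly) on $\Sigma$---an assumption not contained in the hypotheses. Second, the asserted continuity of $(\Sigma',q)\mapsto d_{\Sigma'}(o,q)$ is false in general: this function is only \emph{lower} semicontinuous. Writing $d_{\Sigma'}(o,q)=|(\exp_o|_{T_o\Sigma'})^{-1}(q)|$, convergence $V_n\to V$, $q_n\to q$ with $q_n\in\exp_o(V_n)$ does not prevent $d_{\Sigma'_n}(o,q_n)\to\infty$, because $q_n$ may sit on a ``sheet'' of $\Sigma'_n$ that passes close to $q$ while being intrinsically far from $o$; for the same reason the domain $\{(V,q)\mid q\in\exp_o(V)\cap\overline{B_D(o)}\}$ need not be closed, so it is not compact. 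Upper semicontinuity---which is what a finite supremum requires---is exactly the statement that the family of planes through $o$ is \emph{uniformly} properly embedded, which is what you are trying to prove; the argument is circular at this point.

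It is worth noting how the paper sidesteps this difficulty entirely: it never considers the (possibly badly behaved) family of all planes through a point. Instead it fixes $p\in\Sigma\cap K$, uses properness of the action only to conclude that $G^K=\{g\in G\mid gp\in K\}$ is compact, and maximizes the continuous function $(g,k_1,k_2)\mapsto d^\Sigma(g^{-1}k_1,g^{-1}k_2)$ over the compact set $S_K=\{(g,k_1,k_2)\in G^K\times K\times K\mid g^{-1}k_1,g^{-1}k_2\in\Sigma\}$; the only continuity invoked is that of the intrinsic distance $d^\Sigma$ on the \emph{single} plane $\Sigma$, and property (b) of Definition \ref{def:supthm} follows from a sequential compactness argument with $g_n\in G^{K_n}$. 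To rescue your route you would have to supply a genuine proof that $\rho(D)<\infty$ (equivalently, uniform proper embeddedness of the translates of $\Sigma$ through a point), and nothing in your text does this; as it stands the proposal establishes the conclusion only under the additional hypothesis that the stabilizer of $\Sigma$ in $G$ acts cocompactly on $\Sigma$.
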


\begin{proof}
Let $\Sigma\subset M$ be a plane so that $g\Sigma\in\P$ for all $g\in G$. We may also assume that $p\in\Sigma\cap K\neq\emptyset$. Since the projection $G\to M$ is proper, the subset $G^K=\{g\in G\mid gp\in K\}\subset G$ is compact. The set
$$S_K:=\{(g,k_1,k_2)\mid g\in G^K, k_1,k_2\in K, g^{-1}k_1,g^{-1}k_2\in\Sigma\}\subset G^K\times K\times K$$
is compact and the function
$$\delta_K\colon S_K\to\RR^+_0\text{ with }\delta_K(g,k_1,k_2)=d^\Sigma(g^{-1}k_1,g^{-1}k_2)$$
is continuous. Therefore
$$D_K=\max_{g\in G^K}\diam^{g\Sigma} K\cap g\Sigma=\max\{d^\Sigma(g^{-1}k_1,g^{-1}k_2)\mid(g,k_1,k_2)\in S_K\}=\max\delta_K $$
exists and is finite. The convex hull of a subset of a plane has the same diameter as the subset itself. Hence
$$\diam^{g\Sigma}\conv_{g\Sigma}(K)=\diam^{g\Sigma} K\cap g\Sigma\leq D_K\text{ for all }g\in G^K\ .$$
By definition
$$\widehat{K}\subset\bigcup_{g\in G^K}\conv_{g\Sigma}(K)$$
and each slice $\conv_{g\Sigma}(K)$, $g\in G^K$, intersects $K$. It now follows that
\begin{equation}\label{diamKestimate}\diam\widehat{K}\leq 2D_K+\diam K\ .\end{equation}
\par
Finally, let $K_n$ be a sequence of compact sets as in Definition \ref{def:supthm} (b). If $K_n$ is empty for some $n$, then $K_m$ is empty for all $m\geq n$ and we have $D_{K_m}=\diam K_m=0$ hence $\diam\widehat{K_m}=0$ for all $m\geq n$, hence $\lim_{n\to\infty}\diam\widehat{K_n}=0$. If $K_n\neq\emptyset$ for all $n$, then for each $n\in\NN$ we have $g_n\in G^{K_n}$, $k_{1,n},k_{2,n}\in K_n$ so that
$$D_{K_n}=d^\Sigma(g_n^{-1}k_{1,n},g_n^{-1}k_{2,n})\ .$$
By compactness, we can assume that $\lim_{n\to\infty}g_n=g_\infty$ exists. Since\\ $\lim_{n\to\infty}\diam K_n =0$, the intersection $\bigcap_n K_n=\{q\}$ is a singleton and $\lim_{n\to\infty}k_{1,n}=q=\lim_{n\to\infty}k_{2,n}$. In particular, by continuity of $d^\Sigma$,
$$\lim_{n\to\infty}D_{K_n}=\lim_{n\to\infty}d^\Sigma(g_n^{-1}k_{1,n},g_n^{-1}k_{2,n})=d^\Sigma(g_\infty^{-1}q,g_\infty^{-1}q)=0\ .$$
By \eqref{diamKestimate},
$$\lim_{n\to\infty}\diam\widehat{K_n}\leq\lim_{n\to\infty}\left(2D_{K_n}+\diam K_n\right)=0\ .$$
\end{proof}

We now show that many nilpotent Lie groups with left invariant metric have Euclidean plane covers.

\begin{prop}\label{lm:nilp} Let $G$ be a simply connected $k$-step nilpotent Lie group with a left invariant metric. Let $\g$ be its Lie algebra and
$$0=\g_0\lhd\g_1\lhd\g_2\lhd\cdots\lhd\g_k=\g $$
be the upper central series of $\g$. Assume that for some $i\in\{1,\ldots,k\}$ we have
\begin{equation}\label{dimcondition}\dim\g_i/\g_{i-1}>1+\dim\g_{i-1}\ .\end{equation}
Then $G$ has a Euclidean plane cover and has a support theorem.
	
It follows immediatly that $G$ has a support theorem if $\dim\g_i\geq 2^i$ for some $i\in\{1,\ldots,k\}$, in particular if $\dim G\geq 2^k$, or if the center of $G$ has at least dimension $2$.
\end{prop}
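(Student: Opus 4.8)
The plan is to reduce the entire statement to the construction of a single totally geodesic flat $2$-plane through the identity $e$, and then to invoke homogeneity. Left translations are isometries of the left-invariant metric, so if $\Sigma=\exp(\h)$ is such a plane arising from a $2$-dimensional subalgebra $\h\subset\g$, then the family $\P=\{g\Sigma\mid g\in G\}$ consists of totally geodesic flat planes, each diffeomorphic to $\RR^2$ and closed (connected subgroups of a simply connected nilpotent group are closed and meet $\RR^2$ under $\exp$). This family covers $G$ because $g\in g\Sigma$, and it contains all translates of $\Sigma$, so Proposition \ref{prop:homspace} immediately produces both the Euclidean plane cover and the support theorem. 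Thus the whole proposition rests on exhibiting one abelian, totally geodesic $2$-dimensional subalgebra.

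Next I would recall the Koszul description of the Levi-Civita connection for a left-invariant metric: on left-invariant fields one has $\nabla_X X=-\ad_X^* X$, where $\ad_X^* X$ is characterised by $\langle\ad_X^* X,Z\rangle=\langle X,[X,Z]\rangle$. The subgroup $H=\exp(\h)$ is totally geodesic iff $\nabla_X X\in\h$ for all $X\in\h$, and if $\h$ is abelian then $H\cong\RR^2$ carries a flat left-invariant metric. The structural observation that trivialises the total-geodesy condition is to place $\h$ inside the orthogonal complement $V:=\g_i\ominus\g_{i-1}$ of $\g_{i-1}$ in $\g_i$. For any $X\in V$ and any $Z\in\g$ we have $[X,Z]\in[\g_i,\g]\subseteq\g_{i-1}$, whereas $X\perp\g_{i-1}$; hence $\langle X,[X,Z]\rangle=0$ for every $Z$, so $\ad_X^* X=0$ and in fact $\nabla_X X=0$. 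Consequently every $2$-dimensional abelian subspace of $V$ is automatically totally geodesic and flat, and total geodesy is no longer an issue.

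It then remains to find two linearly independent elements $X,Y\in V$ that commute. The bracket restricts to an alternating, $\g_{i-1}$-valued form $B(X,Y)=[X,Y]$ on $V$ (again because $[\g_i,\g]\subseteq\g_{i-1}$), and I seek a pair on which $B$ vanishes. Fixing any $X\in V\setminus\{0\}$, the linear map $B(X,\cdot)\colon V\to\g_{i-1}$ has kernel of dimension at least $\dim V-\dim\g_{i-1}=\dim\g_i/\g_{i-1}-\dim\g_{i-1}$, which by hypothesis \eqref{dimcondition} is strictly larger than $1$. Since this kernel contains $X$, it also contains some $Y$ linearly independent of $X$, and then $[X,Y]=0$, so $\h=\mathrm{span}(X,Y)$ is the desired subalgebra. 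I regard this as the crux of the argument and the only genuine difficulty: the inequality $\dim\g_i/\g_{i-1}>1+\dim\g_{i-1}$ is used in an essential way precisely to force the commuting pair to be linearly independent.

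Finally, for the stated consequences I would argue by a recursion for $a_j:=\dim\g_j$. If \eqref{dimcondition} failed for every $j\in\{1,\dots,i\}$, then $a_j-a_{j-1}\le 1+a_{j-1}$, i.e. $a_j\le 2a_{j-1}+1$; with $a_0=0$ induction gives $a_j\le 2^j-1$, whence $\dim\g_i\le 2^i-1<2^i$. Therefore $\dim\g_i\ge 2^i$ forces \eqref{dimcondition} for some $j\le i$. Taking $i=k$ covers the case $\dim G\ge 2^k$, while $i=1$, where $\g_1$ is the center, covers the case in which the center has dimension at least $2$.
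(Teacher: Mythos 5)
Your proposal is correct and follows essentially the same route as the paper: you locate a commuting pair of linearly independent vectors in the orthogonal complement of $\g_{i-1}$ inside $\g_i$ using the dimension count forced by \eqref{dimcondition}, show via the Koszul formula (in your case phrased as $\nabla_X X=-\ad_X^*X$ plus polarization, in the paper's as a direct computation of $\langle\nabla_x y,z\rangle$) that the resulting abelian plane $\exp(\h)$ is totally geodesic and flat, and then obtain the support theorem from left translations and Proposition \ref{prop:homspace}. Your explicit recursion $a_j\le 2a_{j-1}+1\Rightarrow a_j\le 2^j-1$ nicely fills in the ``it follows immediately'' claim, which the paper leaves to the reader.
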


\begin{proof} Let $i$ be as in the assumption of the theorem and let $\h_i=\g_{i-1}^\perp\cap\g_i$ be the orthogonal complement of $\g_{i-1}$ in $\g_i$. Then
$$\dim\h_i=\dim\g_i/\g_{i-1}>1+\dim\g_{i-1}\ .$$
For any $x\in\h_i$, the kernel of the linear map $\ad_x\colon\h_i\to\g_{i-1}$ is therefore at least $2$-dimensional. Hence there are $x,y\in\h_i\setminus\{0\}$ with $[x,y]=0$. By the Koszul formula for left invariant vector fields we have for any $z\in\g$, 
\begin{equation}\label{koszulformula}
2\langle\nabla_{x}y,z\rangle=\langle\underbrace{[x,y]}_{=0},z\rangle -\langle x,\underbrace{[y,z]}_{\in\g_{i-1}}\rangle -\langle y,\underbrace{[x,z]}_{\in\g_{i-1}}
\rangle = 0\ .\end{equation}
This shows that $\nabla_{x}x=\nabla_xy=\nabla_yy=0$, hence $F=\{\Exp(rx+sy)\mid r,s\in\RR\}$ is a totally geodesic flat submanfold of $G$ and therefore the translates of $F$ form a Euclidean plane cover of $G$. Since the exponential map of a nilpotent Lie group is a diffeomorphism, $F$ is a closed submanifold.
\end{proof}

\begin{rem}A nilpotent group can have a plane cover even if the dimension condition \eqref{dimcondition} is violated. A $k$-step nilpotent group must have at least dimension $n=k+1$, the groups with this minimal dimension are called filiform, \cite{KP2010}. For example (in the notation of \cite{KP2010}, p 1592), let $L_n$ be the metric Lie algebra with orthonormal basis $\{X_1,X_2,\ldots,X_n\}$, and Lie bracket so that $[X_1,X_i]=X_{i+1}$ and all other brackets trivial. Then if $n>3$, \eqref{koszulformula} shows that any pair $X_i,X_j$ with $1\leq i\leq j\leq n$, $i+1<j$, spans a plane in $L_n$.
\end{rem}

\section{A support theorem for the Heisenberg group}

We now show that the Heisenberg group $H^3$ has a support theorem. In particular, the X-ray transform on $H^3$ is injective.

\begin{thm} For every compact subset $K\subset H^3$, there is a compact subset $\K\subset H^3$ such that if $f\in C_c(H^3)$ is so that $\int_{\gamma}f=0$ for all geodesics $\gamma$ in $H^3\setminus K$, then $f|_{H^3\setminus\K}=0$.
\end{thm}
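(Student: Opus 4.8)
The plan is to exploit the explicit geodesic flow of $H^3$ together with its large isometry group, reducing the three-dimensional problem to planar support theorems — the Euclidean one and Krishnan's theorem for simple manifolds. This detour is forced on us: $H^3$ carries conjugate points, and a computation from the Koszul formula \eqref{koszulformula} shows it contains \emph{no} totally geodesic surfaces, so the clean slicing of Proposition \ref{prop:planecover} is unavailable.

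First I would record the geometry of the geodesics. In the left-invariant orthonormal frame $X,Y,Z$ with $[X,Y]=Z$ one finds $\nabla_X X=\nabla_Y Y=\nabla_Z Z=0$, and for $\gamma'=uX+vY+wZ$ the geodesic equations are $\dot u=-wv$, $\dot v=wu$, $\dot w=0$. Hence $w\equiv c$ is constant, the horizontal speed $u^2+v^2$ is constant, and the projection of any geodesic to the $xy$-plane is a straight line (if $c=0$) or a circle (if $c\neq0$). A direct computation of the vertical drift shows the mean vertical velocity over a period equals $(1+c^2)/(2c)\neq0$ for $c\neq0$, while for $c=0$ the geodesic is a genuine straight line of $\RR^3$; in either case $z(t)\to\pm\infty$, so \emph{every} geodesic of $H^3$ is escaping and $\GE{H^3}$ is all geodesics. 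The vertical lines $\Exp(tZ)$ are the fibres of the Riemannian submersion $\pi\colon H^3\to\RR^2$, $(x,y,z)\mapsto(x,y)$, which has totally geodesic fibres onto flat $\RR^2$.

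Second, I would use the isometry group, which contains all left translations, all rotations $R_\theta$ about vertical axes, and the vertical translations $z\mapsto z+s$. The vertical translation symmetry suggests splitting $f$ according to its behaviour in $z$. The zero part $g(x,y)=\int_\RR f(x,y,z)\,dz$ is controlled directly: integrating the vanishing X-ray over the one-parameter family of horizontal geodesics lying above a fixed line $\ell\subset\RR^2$ gives the planar transform of $g$ along $\ell$, and this vanishes whenever $\ell$ avoids $\pi(K)$, so the Euclidean support theorem yields $\supp g\subset\conv(\pi(K))$. The transverse $z$-dependence is detected only by the circular geodesics $c\neq0$, along which the vertical coordinate is an affine-plus-periodic function of $t$; here the X-ray condition becomes a support statement for a family of weighted (phase-attenuated) geodesic transforms in the $xy$-plane. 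Diagonalizing the angular variable by means of $R_\theta$ reduces each of these to a transform on a meridian half-plane whose reduced two-dimensional metric is free of conjugate points, i.e.\ simple, so that Krishnan's support theorem applies and forces each transverse component of $f$ to vanish outside the convex hull of the corresponding slice of $K$.

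The main obstacle is exactly this reduction of the circular-geodesic data. Because $H^3$ has no totally geodesic planes, the geodesics that carry all information about the $z$-dependence of $f$ twist out of every candidate surface; controlling the resulting weighted transforms and verifying that the reduced meridian problem is genuinely simple (conjugate-point free in the region that matters) is the technical heart, and the precise reason Krishnan's theorem, rather than an elementary inversion, is needed. Once the vanishing of $g$ and of all transverse components outside an explicit neighbourhood of $K$ is in hand, I would assemble $\K$ from the convex hulls of the planar slices; homogeneity under the left translations and the rotations $R_\theta$ then supplies the uniform diameter bound, exactly as in the proof of Proposition \ref{prop:homspace}, giving properties (a) and (b) of Definition \ref{def:supthm}.
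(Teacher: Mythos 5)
Your geometric setup is accurate (the geodesic equations, the fact that every geodesic of $H^3$ escapes, the absence of totally geodesic surfaces), and your first step is sound: integrating the vanishing X-ray data over the one-parameter family of horizontal geodesics lying over a fixed line $\ell\subset\RR^2$ does give the planar transform of $g(x,y)=\int_\RR f(x,y,z)\,dz$, so the Euclidean support theorem controls $\supp g$. But the second half has two genuine gaps. First, the reduction of the ``transverse $z$-dependence'' does not work as described: after separating the $z$-variable (Fourier transform in $z$), the data along the spiralling geodesics become integrals of $\hat f(\cdot,\cdot,\xi)$ over \emph{circles} in the plane against oscillatory weights determined by the vertical height function of each geodesic. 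That is a weighted/attenuated circle transform, not the geodesic ray transform of a fixed function on any Riemannian surface: through each point of the plane in each direction there is a whole one-parameter family of such circles, so they cannot be the geodesics of a single ``reduced two-dimensional metric,'' and Krishnan's theorem \cite{Kr2009} — which concerns the unweighted geodesic transform on simple manifolds — simply does not apply. Making this harmonic-analytic route work is essentially the (hard) program of \cite{St1991}, not a corollary of \cite{Kr2009}. Second, even if the transverse step were granted, it would only confine the support \emph{horizontally}: you would get $\supp f\subset\conv(\pi(K))\times\RR$, which is not compact, and nothing in your argument bounds the extent of $\supp f$ in the center direction. Your closing appeal to Proposition \ref{prop:homspace} cannot repair this, since that proposition requires a plane cover — which, as you yourself note, $H^3$ does not possess.

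The paper's proof bounds the vertical extent by a different mechanism, and it is worth seeing how. The horizontal geodesics $\gamma_{x,\alpha}$ are exactly tangent to the paraboloid $P=\{w\geq(u^2+v^2)/4\}$, touching $\partial P$ in a single point. One encloses $K$ between two oppositely oriented vertical translates of $P$ and takes $\K$ to be their (compact) intersection; if $\supp f$ protruded, one could normalize so that $\supp f\subset P$ touches $\partial P$ at a point $q=\gamma_{x_0}(x_0)$ while $K$ lies in a strictly higher paraboloid $P^h$. Then the family $\Gamma=\{\gamma_{x_0}^z\mid z\in[z_0,0]\}$ of parallel horizontal geodesics slides from the tangent geodesic at $q$ down to one missing $\supp f$ entirely; the uniform escape property of $2$-step nilpotent groups (\cite{PS2016}, Theorem 3.1) plus compactness lets one fatten $\Gamma$ to an open family $\A$ of geodesics avoiding both $K$ and $\supp f\setminus B_\delta(q)$, and Krishnan's theorem is then applied \emph{locally}, to $f$ itself inside the small simple ball $\overline{B_{2\delta}(q)}$, forcing $f\equiv 0$ near $q$ — a contradiction. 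So Krishnan's result is indeed the right tool, but used on the genuine three-dimensional function in a small conjugate-point-free ball, with the paraboloid tangency and the uniform escape estimate doing the global work that your Fourier decomposition cannot.
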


\begin{proof}
We will use the Lie group exponential
$$\Exp\colon\RR^3\cong\h^3\cong H^3$$
as coordinates for $H^3$, with the last coordinate representing the center and the first two its orthogonal complement. The Campbell-Hausdorff formula for a $2$-step nilpotent Lie group reads
$$\Exp(x)\Exp(a)=\Exp\left(x+a+\frac{1}{2}[x,a]\right)$$
for $x,a$ in the Lie algebra. In the case of the Heisenberg group this becomes
$$\Exp(x,y,z)\Exp(a,b,c)=\Exp\left(x+a,y+b,z+c+\frac{xb-ya}{2}\right)\ .$$
In the sequel we will suppress the exponential map in the notation. For the geodesics in the Heisenberg group see \cite{BTV1995}, p31. We will only need horizontal geodesics, in particular those of the form
$$\gamma_x(t)=\left(x,t,\frac{xt}{2}\right)=(x,0,0)(0,t,0)$$
and those obtained from $\gamma_x$ by rotations about the $z$-axis with an angle $\alpha$, which we will denote by $\gamma_{x,\alpha}$. These geodesics lie outside the paraboloid
$$P=\left\{(u,v,w)\mid w\geq\frac{u^2+v^2}{4}\right\} $$
with exception of one point. The geodesic $\gamma_x$ touches the boundary of $P$ in the point $\gamma_x(x)=\left (x,x,\frac{x^2}{2}\right )$. Clearly every point on the boundary of $P$ is the unique intersection point of $\partial P$ with a suitable geodesic $\gamma_{x,\alpha}$.    

\begin{center}
 \psfrag{a}{$a$}
 \psfrag{x}{$x$}
 \psfrag{x0}{$x_0$}
 \psfrag{y}{$y$}
 \psfrag{z}{$z$}
 \psfrag{P}{\Large$P$}
 \psfrag{Ph}{\Large$P^h$}
 \psfrag{B2d(q)}{\Large$B_{2\delta}(q)$}
 \psfrag{gx0}{$\gamma_{x_0}$}
 \psfrag{gx0z}{$\gamma_{x_0}^{z_0}$}
  \includegraphics[width=0.6\textwidth]{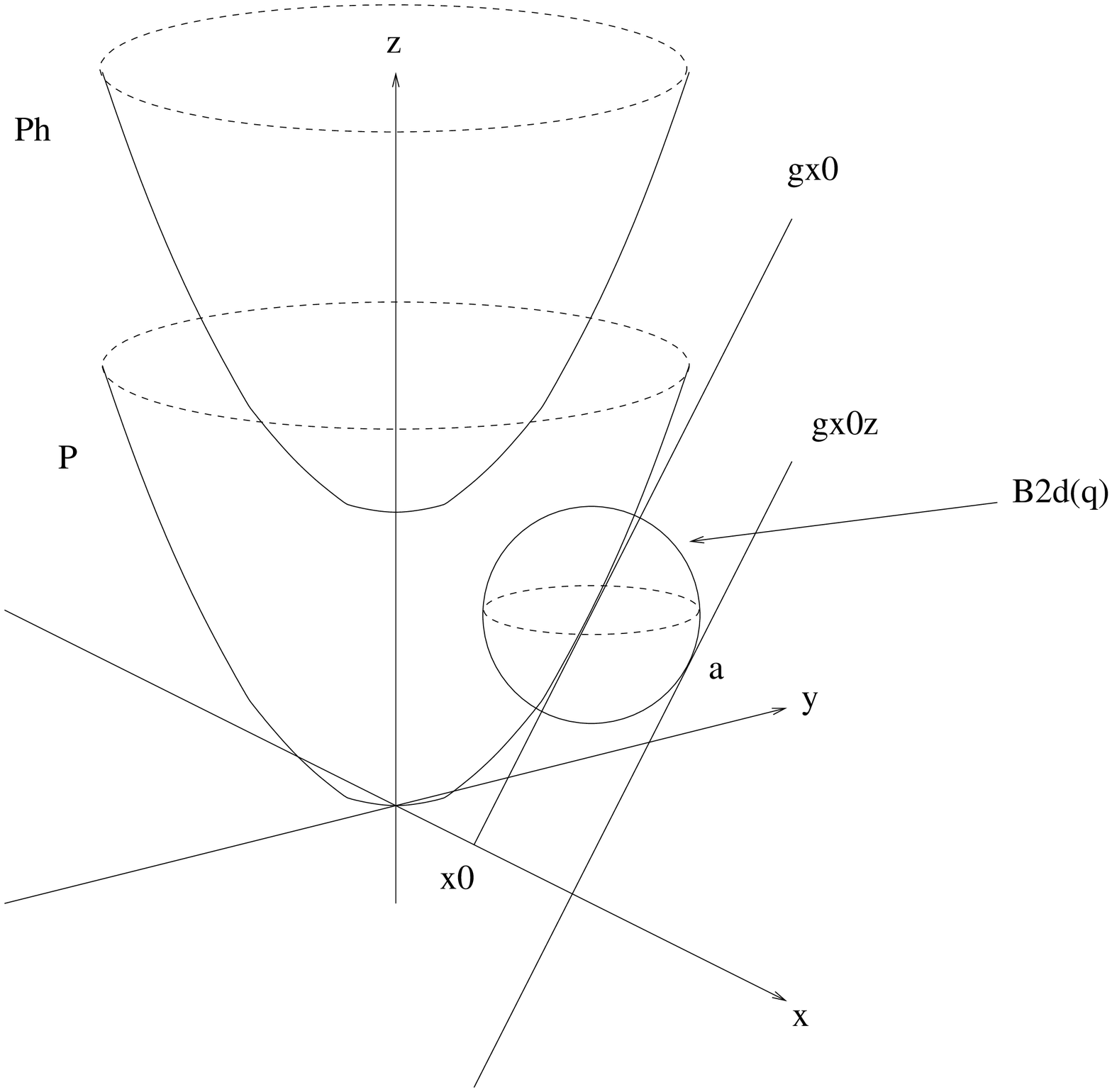}
\end{center}

Let $f$ be a continuous function on $H^3$, with compact support. Let $L\subset H^3$ be a compact subset so that the integral of $f$ over any geodesic avoiding $L$ vanishes. We can enclose $L$ by two shifted paraboloids,
\begin{multline*}
L\subset (P-(0,0,h_-))\cap (-P+(0,0,h_+))= \\ = \left\{(u,v,w)\mid \frac{u^2+v^2}{4}-h_-\leq w\leq h_+-\frac{u^2+v^2}{4}\right\}
\end{multline*}
for suitable $h_-,h_+\in\RR$. We will show that the support of $f$ then also lies in the intersection of these paraboloids. By symmetry it clearly suffices to do this for one paraboloid only.  Shifting $f$ in direction of the center, i.e. the $z$-axis, if necessary, we may assume that $L$ lies in some shifted paraboloid and that the support of $f$ lies in the paraboloid $P$,
$$L\subset P^h:=P+(0,0,h)\subset P\supset\supp f\text{ , for some }h>0\ ,$$
and that the support of $f$ intersects the boundary of $P$. Rotating about the $z$-axis if necessary, we can also assume that such an intersection point $q$ lies on a geodesic $\gamma_{x_0}$,
$$\supp f\cap\partial P\ni q=\gamma_{x_0}(x_0)$$
for some $x_0\in\RR$. Let $\delta\in\RR$ be so that $B_{2\delta}(q)\cap P^h=\emptyset$ and $\overline{B_{\rho}(q)}$ is simple for all $\rho\leq 2\delta$, i.e. has strictly convex boundary and the Riemannian exponential map is a diffeomorphism where defined. For $z\in(-\infty,0]$ consider the geodesics
$$\gamma_{x_0}^z(t)=\left(x_0,t,\frac{x_0t}{2}+z\right)\ .$$
Among these only $\gamma_{x_0}=\gamma_{x_0}^0$ intersects $P$. There is $z_0<0$ so that $\gamma_{x_0}^{z_0}$ intersects $\partial B_{2\delta}(q)$ in one point $a$.

The geodesics in $\Gamma:=\{\gamma_{x_0}^z\mid z\in[z_0,0]\}$ do not intersect $P^h$. By Theorem 3.1 from \cite{PS2016}, geodesics in 2-step nilpotent Lie groups are uniformly escaping. Thus there is $T\in\RR^+$, such that for all geodesics $\gamma$ with  $\gamma(0)\in B_{2\delta}(q)$ and all $t>T$, we have $\gamma(t)\not\in L\cup(\supp f\setminus B_{\delta}(q))$. On the other hand, since $[-T,T]$ is compact, there is $\epsilon>0$ so that no geodesic in the $\epsilon$-neighbourhood $\A$ of $\Gamma$ intersects $L\cup(\supp f\setminus B_{\delta}(q))$. $\Gamma\subset\A$ is a deformation retract.

In particular any geodesic in $\A$ can be deformed to the point $a$. From Theorem 1, \cite{Kr2009}, we infer that the function $f$ vanishes on the union of the intersections of the geodesics in $\A$ with $B_{\delta}(q)$, which is a neighbourhood of $q$, hence $q$ can not lie in the closure of $f^{-1}(\RR\setminus\{0\})$.  
\end{proof} 

\section{Non-homogeneous examples in dimension $3$ with conjugate points}\label{example3dim}

The previous examples are all homogeneous. We finish this paper presenting $3$-dimen\-sional non-homogeneous examples with conjugate points admitting Euclidean and hyperbolic plane covers so that for every compact set $K$ we have $\K$ bounded.

Let $g$ be a Riemannian metric on $\RR^3$ given in cylindrical coordinates
$(t,r,\alpha)\in\RR\times [0,\infty) \times [0,2\pi)$ by
\begin{equation}\label{eq:metric3dim} 
g = dt^2 + dr^2 + f(r,t)^2d\alpha^2, 
\end{equation}
where  $f\colon\RR\times[0,\infty)\to\RR$ is a smooth function satisfying
$$f(r,t)=\left\{\begin{array}{l@{\text{ if }}l}
\sin(r)&0\leq r\leq\frac{3\pi}{4}\\
\phi(r) & \frac{3\pi}{4}\leq r\leq\pi\\
2+r-\pi & \pi\leq r\leq 2\pi\\
2+r-\pi & 2\pi\leq r\text{ and }t\leq 0\\
>2+r-\pi & 2\pi\leq r\text{ and }t>0
\end{array}\right.$$
where $\phi\colon\left[\frac{3\pi}{4},\pi\right]\to(0,2]$ is so that $f$ becomes smooth.

The submanifold $F=\{(0,r,\alpha)\mid r\in[0,\infty),\alpha \in [0,2\pi)\}$ is locally isometric to the round unit sphere for $r<\frac{3}{4}\pi$. Inside the cylinder $r\leq 2\pi$, the manifold is isometric to a product, inside the cylinder $r\leq\frac{\pi}{2}$ this is the product of the round hemisphere with $\RR$.

For $\alpha\in[0,\pi)$ let $E_\alpha:=\{(t,r,\alpha),(t,r,\alpha+\pi),\mid t\in\RR, r\in[0,\infty)\}$. Then $\mathcal{P} = \{ E_\alpha \mid \alpha \in [0,\pi)\}$ is a plane cover. The geodesics emanating from the origin are the same as those of the standard metric of $\RR^3$. In particular $g$ is complete. For $t\in\RR$, $S_t=\{(t,r,\alpha)\mid0\leq r\leq\frac{\pi}{2},0\leq\alpha<2\pi\}$ is a totally geodesic submanifold with boundary, isometric to a standard hemisphere. Antipodal points $(t,\frac{\pi}{2},\alpha)$ and $(t,\frac{\pi}{2},\alpha+\pi)$ on the boundary of $S_t$ are conjugate. To see this, observe that the embeddings \begin{equation}
\label{y34kk}
\{(t,r,\alpha)\mid r\leq\pi\}\cong\RR\times S^2_+\subset\RR\times S^2\end{equation} are isometric. The rotations of $S^2$ about the axis corresponding under \eqref{y34kk} to a pair of antipodal points in the boundary of $S_+$ provide a one parameter family of minimizing geodesics joining two such points. The metric is not homogeneous, in fact $g$ is flat in the region $\{r>\pi,t<0\}$ and isometric to $S^2\times\RR$ in the region $\{r\leq\frac{\pi}{2}\}$. In the region $\{t<0\text{ or }r<2\pi\}$, $g$ is isometric to $F\times\RR$, but clearly, this isometry does not hold globally.

Let $K\subset\RR^3$ be compact and let $D=\max\{\|k\|\mid k\in K\}=\max\{\sqrt{t^2+r^2}\mid (t,r,\alpha)\in K\}$, note that the distance of a point from the origin with respect to $g$ is the same as the euclidean distance. Thus $K\subset\overline{B_D(0)}$. Clearly, for all $E_\alpha\in\P$ we have $K\cap E_\alpha\subset\overline{B_D(0)}$, hence
$$\K\subset\bigcup_{\alpha}\conv_{E_\alpha}(K)\subset\overline{B_D(0)}\ .$$

In order to obtain a similar example with a hyperbolic plane cover, we replace the metric \eqref{eq:metric3dim} with
\begin{equation}\label{eq:metric3dimhyp} 
g = dt^2 + e^{2t}\left(dr^2 + f(r,t)^2d\alpha^2\right)\ . 
\end{equation}
The submanifolds $E_\alpha$ as above are now totally geodesic hyperbolic planes (with curvature $-1$) and $\P=\{ E_\alpha \mid \alpha \in [0,\pi)\}$ is a hyperbolic plane cover. The hemispheres $S_t$ from above are not totally geodesic. As before the points $(t,\frac{\pi}{2},\alpha)$ and $(t,\frac{\pi}{2},\alpha+\pi)$ are conjugate. The maps in \eqref{y34kk} are still isometric provided $\RR\times S^2$ carries the metric $dt^2+e^{2t}g_{S^2}$ where $g_{S^2}$ is the standard metric on $S^2$. As above rotations of $S^2$ provide a one parameter family of minimizing geodesics joining such two points.

\noindent
Norbert Peyerimhoff\\
Department of Mathematical Sciences, Durham University, Science Laboratories\\
South Road, Durham, DH1 3LE, UK

\bigskip

\noindent
Evangelia Samiou\\
Department of Mathematics and Statistics, University of Cyprus\\
P.O. Box 20537, 1678 Nicosia, Cyprus

\end{document}